\newtheorem{thm}{{\bf  Theorem}}
\newtheorem{preconj}{{\bf  Conjecture}}
\begin{document}

\title{\large \bf A Note on Induced Path Decomposition of Graphs
\thanks
{{\it Key Words}: Path cover, Regular graph}
\thanks {2010{ \it Mathematics Subject Classification}: 05C78
}}

\author{{\normalsize
{\sc S. Akbari${}^{\mathsf{a}}$},\,
{\sc H.R. Maimani${}^{\mathsf{b}}$},\,
{\sc A. Seify${}^{\mathsf{b}} {}^{\mathsf{,c}}$}}
\vspace{3mm}
\\{\footnotesize{${}^{\mathsf{a}}$\it Department of
Mathematical Sciences, Sharif University of Technology, Tehran,
Iran}}
{\footnotesize{}}\\{\footnotesize{${}^{\mathsf{b}}$\it Mathematics Section, Department of
Sciences, Shahid Rajaee Teacher Training University, Tehran,
Iran}}
{\footnotesize{}}\\{\footnotesize{${}^{\mathsf{c}}$\it School of
Mathematics, Institute for Studies in Theoretical Physics and
Mathematics,}}{\footnotesize{}}\\{\footnotesize{${}^{\mathsf{}}$\it
P.O. Box 19395-5746,
Tehran, Iran.}}
\thanks{{\it E-mail addresses}: $\mathsf{s\_akbari@sharif.edu}$,
$\mathsf{maimani@ipm.ir}$ and $\mathsf{abbas.seify@gmail.com}$.} }

\date{}

\maketitle

\begin{abstract}
{\small \noindent 
Let $G$ be a graph of order $n$. The path decomposition of $G$ is a set of disjoint paths, say $\mathcal{P}$, which cover all vertices of $G$. If all paths are induced paths in $G$, then we say $\mathcal{P}$ is an induced path decomposition of $G$. Moreover, if every path is of order at least 2, then we say that $G$ has an IPD. In this paper, we prove that every connected $r$-regular graph which is not complete graph of odd order admits an IPD. Also we show that every connected bipartite cubic graph of order $n$ admits an IPD of size at most $\frac{n}{3}$. We classify all connected claw-free graphs which admit an IPD.
}

\end{abstract}

\section{Introduction}
Let $G$ be a simple graph. The vertex set and the edge set of $G$ are denoted by $V(G)$ and $E(G)$, respectively. Also, $|V(G)|$ and $|E(G)|$ are called {\it order} and {\it size} of $G$. The {\it degree} of $v$ is denoted by $d(v)$. If $d(v)=r$, for every $v \in V(G)$, then $G$ is called an $r$-{\it regular} graph. A subgraph $H \subseteq G$ is called {\it induced} if for every two vertices $u,v \in V(H)$, $uv \in E(H)$ if and only if $uv \in E(G)$. Also, $H$ is called {\it spanning} if $V(H)=V(G)$. For two positive integers $m$ and $n$, $K_{m,n}$ denote the complete bipartite graph with part sizes $m$ and $n$, respectively. The cycle and the path of order $n$, denoted by $C_n$ and $P_n$, respectively. A graph $G$ of order $n$ is called {\it Hamiltonian} if it contains a cycle of order $n$. A {\it factor} $F$ is a spanning subgraph of $G$. If $d_F(v) =r$, for every $v \in V(G)$, then $F$ is called an {\it $r$-factor}. A $\{C_n: n \geq k\}$-factor is a factor in which every component is a cycle with at least $k$ vertices. Let $G$ and $H$ be two graphs. We say that $G$ is {\it $H$-free}, if $G$ contains no induced subgraph isomorphic to $H$. A graph $G$ is called {\it claw-free} if $G$ is $K_{1,3}$-free.
A connected graph $G$ is called {\it $2$-(edge)connected} if by eliminating every (edge) vertex, $G$ remains connected. A {\it block} is a maximal 2-connected subgraph of $G$. Every connected graph $G$ has a block decomposition such as $B_1, \ldots, B_b$ in which each $B_i$ is a block of $G$, $\bigcup_{i=1}^{b}E(B_i)= E(G)$ . The block $B$ is called a {\it leaf block}, if it contains at most one cut vertex of $G$. 
\\
A {\it path decomposition} of $G$ is a set of vertex disjoint paths, say $\mathcal{P}=\{P_1, \ldots, P_t\}$, such that $V(G)= \bigcup_{i=1}^{t} V(P_i)$. Moreover, if every $P_i$ is an induced path, then $\mathcal{P}$ is called an {\it induced path decomposition} of $G$. If all paths are induced of order at least two, then we say that $G$ has an {\it IPD}. We denote the smallest size of an IPD for $G$ by $\rho(G)$. Induced path number was studied by several authors, for instance see \cite{broere}, \cite{chartrand}, \cite{Hatting1}, \cite{Hatting2}, \cite{Hatting3}. We consider the induced path decomposition of graphs. We prove some results concerning the existence of an IPD. We use the following interesting and strong theorem which proved by Egawa et al. to prove the existence of an IPD for an $r$-regular graph except complete graph of odd order.

\begin{thm}\label{EKK}{\rm \cite{EKK}}
Let $G$ be a graph and $n \geq 2$ be an integer. Then the following are equivalent:
\\
1. $V(G)$ can be partitioned into $V_1, \ldots, V_t$, where each $V_i$ induces one of $K_{1,1}, K_{1,2}, \ldots, K_{1,n}$.
\\
2. For every $S \subseteq V(G)$, $G \setminus S$ has at most $n|S|$ components with the property that each of their block is an odd order complete graph.
\end{thm}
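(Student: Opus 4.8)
The plan is to prove the two implications separately; the direction $1\Rightarrow 2$ is the routine one, while $2\Rightarrow 1$ carries the real content. For necessity I would first isolate the role of the components appearing in condition~2. Call a connected graph \emph{special} if each of its blocks is a complete graph of odd order. The key lemma is that a special graph $C$ has odd order and, more importantly, admits \emph{no} partition of $V(C)$ into induced copies of $K_{1,1},\ldots,K_{1,n}$. Both facts I would prove by induction on the number of blocks: the order statement follows because attaching a block of odd order at a single cut vertex increases the order by an even number, and the non‑partitionability follows by peeling off a leaf block $B$ with cut vertex $c$. Since $B$ is a clique, every induced star contains at most two vertices of $B$, and among the vertices of $B$ only $c$ has neighbours outside $B$; a parity count on the even‑sized set $B\setminus\{c\}$ then forces an impossible odd leftover unless the stars covering $B\setminus\{c\}$ stay inside $B$, which reduces the problem to the smaller special graph obtained by deleting $B\setminus\{c\}$, contradicting the inductive hypothesis.

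Granting this lemma, necessity is a charging argument. Fix a partition of $V(G)$ into induced stars and a set $S\subseteq V(G)$. Because the vertices of a special component $C$ of $G\setminus S$ cannot be covered using only stars lying inside $C$, at least one star must meet both $C$ and $S$. For a star $\sigma$ meeting $S$, let $a(\sigma)$ denote the number of components of $G\setminus S$ that $\sigma$ touches: if the centre of $\sigma$ lies in $S$ then its at most $n$ leaves give $a(\sigma)\le n$, while if the centre lies outside $S$ then all non‑$S$ vertices of $\sigma$ share the centre's component, so $a(\sigma)\le 1$. In either case $a(\sigma)\le n\,|V(\sigma)\cap S|$. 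Summing over the stars meeting $S$ and using that the stars partition $V(G)$ gives $\#\{\text{special components}\}\le\sum_{\sigma}a(\sigma)\le n\sum_{\sigma}|V(\sigma)\cap S|=n|S|$, which is exactly condition~2.

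The difficult direction is $2\Rightarrow 1$, and this is where I expect the main obstacle. My plan is an extremal argument in the spirit of the Tutte--Berge and Amahashi--Kano deficiency theorems: take a family of vertex‑disjoint induced stars covering the maximum number of vertices and suppose the set $U$ of uncovered vertices is nonempty. Using that $n\ge 2$ (so that both $K_{1,1}$ and $K_{1,2}$ are available for local rearrangements), I would develop an alternating/augmenting analysis that, in the absence of an augmentation, exhibits a set $S$ for which every uncovered vertex lies in a component of $G\setminus S$ whose blocks are forced to be odd complete graphs, and for which the number of such components exceeds $n|S|$, contradicting condition~2. I would first reduce to $G$ connected, which is legitimate because condition~2 is inherited by each component: for $S_0\subseteq V(G_0)$ the special components of $G_0\setminus S_0$ are among those of $G\setminus S_0$, so their number is at most $n|S_0|$.

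The hard part is precisely this barrier extraction. Unlike matchings, a star has up to $n+1$ vertices and the \emph{induced} requirement severely restricts how an uncovered vertex may be absorbed, so one must show that the only residual obstruction is a surplus of special components. As orientation I would note that condition~2 already forces the isolated‑vertex bound $i(G\setminus S)\le n|S|$, since every isolated vertex is a special component; this is the Amahashi--Kano condition, so a (not necessarily induced) $\{K_{1,1},\ldots,K_{1,n}\}$‑factor is available, and the whole difficulty concentrates in \emph{promoting} it to an induced partition. The special‑component count is exactly the invariant that measures when a leaf--leaf edge inside a star cannot be eliminated by a local switch, so the crux of the proof is to show that, whenever no such switch exists, the uncovered region assembles into special components in numbers that violate condition~2.
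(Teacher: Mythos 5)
This statement is Theorem~\ref{EKK} of Egawa, Kano and Kelmans, which the paper quotes from \cite{EKK} and does not prove, so there is no in-paper argument to compare against; your attempt has to be judged on its own. Your necessity direction ($1\Rightarrow 2$) is complete and correct: the lemma that a ``special'' graph (every block an odd complete clique) has odd order and admits no induced $\{K_{1,1},\dots,K_{1,n}\}$-partition is proved soundly by peeling a leaf block (an induced star meets a clique in at most two vertices, only the cut vertex sees outside, and the parity of $|B\setminus\{c\}|$ forces the covering stars to stay inside $B$), and the charging argument $\sum_\sigma a(\sigma)\le n\sum_\sigma|V(\sigma)\cap S|=n|S|$ is valid, using correctly that a star disjoint from $S$ is connected and hence confined to one component of $G\setminus S$.

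The genuine gap is that the sufficiency direction ($2\Rightarrow 1$), which you yourself identify as carrying ``the real content,'' is never actually proved. What you give is a research plan: take a maximum family of disjoint induced stars, assume a vertex is uncovered, and ``develop an alternating/augmenting analysis'' that ``exhibits a set $S$'' violating condition~2. None of the substance is supplied --- no definition of the alternating structure for induced stars, no construction of the barrier $S$, and no argument that the residual components are forced to have all blocks odd and complete. Your own closing sentence concedes that ``the crux of the proof is to show that, whenever no such switch exists, the uncovered region assembles into special components in numbers that violate condition~2,'' which is precisely the statement left unproved. The appeal to the Amahashi--Kano theorem does not close this gap either, since (as you note) it only yields a non-induced star factor, and promoting it to an induced partition is the entire difficulty. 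As it stands the attempt establishes only one of the two implications.
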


Also, we show that every Hamiltonian graph $G \neq K_{2n+1}$,  admits an IPD. Moreover, in this paper we prove that for every connected cubic graph $G$ of order $n$, $\rho(G) \leq \frac{n}{3}$.
\section{Results}
We study the existence of IPD for a graph. Note that if $G$ has an IPD, then every path in IPD can be decomposed into paths of length 1 and 2. So, the existence of IPD is equivalent to the existence of an induced $\{P_2, P_3\}$-decomposition. Since $P_2=K_{1,1}$ and $P_3=K_{1,2}$, we can use Theorem \ref{EKK}. Now, we prove some results concerning the existence of IPD in graphs.

\begin{thm}\label{claw-free}
Let $G$ be a connected claw-free graph. Then $G$ has an IPD if and only if $G$ has at least one block which is not a complete graph of odd order
\end{thm}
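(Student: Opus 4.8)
The plan is to apply Theorem \ref{EKK} with $n=2$. As already noted in the excerpt, an IPD is exactly an induced $\{P_2,P_3\}$-decomposition, and since $P_2=K_{1,1}$ and $P_3=K_{1,2}$, the graph $G$ admits an IPD if and only if condition~1 of Theorem \ref{EKK} holds for $n=2$, which is equivalent to condition~2: for every $S\subseteq V(G)$, the number of components of $G\setminus S$ all of whose blocks are odd order complete graphs is at most $2|S|$. Call such a component \emph{bad}. Thus the whole statement reduces to verifying this inequality, and the two directions of the theorem come from the two regimes $S=\emptyset$ and $|S|\ge 1$.

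For the forward direction I would take $S=\emptyset$. Then $G\setminus S=G$ is a single component, and condition~2 forces it to be non-bad; that is, $G$ must possess a block that is not an odd order complete graph. This direction uses no claw-freeness. Conversely, assume $G$ has such a block. The same choice $S=\emptyset$ gives no bad component, so the inequality holds trivially there, and it remains to treat $|S|\ge 1$.

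The heart of the argument, and the only place claw-freeness enters, is a counting lemma: in a claw-free graph, deleting a single vertex increases the number of connected components by at most one. Indeed, if $v$ lies in a component that breaks into pieces $C_1,\dots,C_m$ upon removing $v$, then choosing a neighbor $u_i\in C_i$ of $v$ produces pairwise non-adjacent neighbors $u_1,\dots,u_m$ of $v$; since $G$ is $K_{1,3}$-free we get $m\le 2$, so that component splits into at most two. Iterating over the vertices of $S$ one at a time (each deletion leaves a claw-free graph) yields $c(G\setminus S)\le c(G)+|S|=1+|S|$, where $c(\cdot)$ denotes the number of components. As the number of bad components never exceeds the total number of components, for every nonempty $S$ we obtain at most $1+|S|\le 2|S|$ bad components, which is exactly condition~2.

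I expect the main difficulty to be expository rather than mathematical: stating the one-vertex splitting lemma cleanly and justifying the induction $c(G\setminus S)\le 1+|S|$ by deleting vertices one at a time, while recording that an induced subgraph of a claw-free graph is again claw-free. One should also confirm the equivalence used above at $n=2$: any induced path of order at least two subdivides into induced copies of $P_2$ and $P_3$, and conversely such pieces are themselves induced paths, so the IPD formulation and condition~1 of Theorem \ref{EKK} coincide.
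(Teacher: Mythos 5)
Your proof is correct, and it follows the paper's broad strategy (apply Theorem \ref{EKK} with $n=2$ and use claw-freeness to control the components of $G\setminus S$), but both halves are organized differently. For the hard direction, the paper argues by contradiction: if $G\setminus S$ had $2|S|+1$ bad components with $S\neq\varnothing$, connectivity gives each bad component an edge to $S$, and the pigeonhole principle produces a vertex of $S$ adjacent to three distinct bad components, hence a claw. You instead prove the local lemma that deleting one vertex from a claw-free graph splits its component into at most two pieces, and iterate to get $c(G\setminus S)\le 1+|S|\le 2|S|$; this is a cleaner, slightly stronger bound (it caps \emph{all} components, not just bad ones) resting on the same underlying observation about three pairwise non-adjacent neighbours. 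For the converse direction, the paper does not invoke Theorem \ref{EKK} at all: it gives a separate induction on the block tree, peeling off a leaf block and deriving a contradiction from a putative path $uvw$ through the cut vertex. You get this direction in one line by applying the implication $1\Rightarrow 2$ of Theorem \ref{EKK} with $S=\varnothing$, which is more economical and arguably preferable, provided one is willing to use both directions of the cited equivalence. The only expository point to make explicit is the one you already flag: an IPD is the same as an induced $\{K_{1,1},K_{1,2}\}$-partition, so the reduction to Theorem \ref{EKK} is legitimate.
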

 
\begin{proof}
By contrary, suppose that $G$ has no IPD. By Theorem \ref{EKK}, there exists $S \subseteq V(G)$ such that $G \setminus S$ has at least $2|S|+1$ components with the property that any block of each component is a complete graph of odd order. We call these components bad components. Note that if $S= \varnothing$, then there is nothing to prove. So, we may assume that $S \neq \varnothing$. So, $G \setminus S$ has at least three bad components and
there exists at least one edge between every bad component and $S$. Now, by pigeon hole principle there exists $s \in S$ such that it has at least 3 edges to three distinct bad components. This yields that $G$ has a claw, a contradiction. 
\\
For the other side, let every block of $G$ be a complete graph of odd order. We show that $G$ has no IPD. If $G$ is a complete graph of odd order, then clearly $G$ has no IPD. Thus, assume that $G$ has at least two blocks. Let $B$ be a leaf block of $G$ and $B$ contains a unique cut vertex $v$ of $G$. By induction hypothesis, $G \setminus (B \setminus \{v\})$ has no IPD. Now, if $G$ has an IPD, then one of the induced paths in IPD should be $uvw$, where $u \in V(B) \setminus \{v\}$ and $w \notin V(B)$. This implies that $B \setminus \{u,v\}$  is a complete graph of odd order which has an IPD, a contradiction.  
\end{proof} 

\begin{thm}\label{regular}
Let $G$ be a connected $r$-regular graph which is not a complete graph of odd order. Then $G$ has an IPD. 
\end{thm}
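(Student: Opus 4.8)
The plan is to invoke Theorem~\ref{EKK} with $n=2$. Since $K_{1,1}=P_2$ and $K_{1,2}=P_3$, a partition of $V(G)$ into parts each inducing $K_{1,1}$ or $K_{1,2}$ is precisely an IPD, so it suffices to verify condition~2 of Theorem~\ref{EKK} for $n=2$: for every $S\subseteq V(G)$, the graph $G\setminus S$ has at most $2|S|$ components all of whose blocks are complete graphs of odd order. Following the terminology of the proof of Theorem~\ref{claw-free}, call such components \emph{bad}. I would verify the bound in the two cases $S=\varnothing$ and $S\neq\varnothing$.

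For $S=\varnothing$ the only component is $G$ itself, so I must show $G$ is not bad. The key observation is that a connected $r$-regular graph all of whose blocks are odd complete graphs must consist of a single block. Indeed, if it had at least two blocks, pick a leaf block $B\cong K_{2a+1}$ with $a\ge 1$; its $2a$ non-cut vertices have all their neighbours inside $B$, forcing $r=2a$, and then the unique cut vertex of $B$ already has degree $2a=r$ inside $B$, leaving no edge to the rest of the graph---contradicting that it is a cut vertex. Hence such a graph equals $K_{2a+1}$, which is excluded by hypothesis. So $G$ is not bad and $G\setminus\varnothing$ has $0\le 2|S|$ bad components.

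For $S\neq\varnothing$ I would argue by double counting the edges between $S$ and the bad components $B_1,\dots,B_k$. Write $\partial_i$ for the number of edges joining $B_i$ to $S$. The central claim is that $\partial_i\ge r/2$ for every $i$. If $B_i$ is a single block $K_{2a+1}$ (including the single-vertex case $a=0$), then each of its vertices has exactly $2a$ neighbours inside $B_i$, so $\partial_i=(2a+1)(r-2a)$; since $B_i$ must send at least one edge to $S$ we get $r>2a$, and this expression is at least $r$ (its minimum over admissible $a$, attained at $a=0$). A multi-block bad component can occur only when $r\ge 3$, since a leaf block $K_{2a+1}$ with $a\ge1$ needs $r>2a\ge 2$. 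In that case I examine a leaf block $K_{2a+1}$: its $2a$ non-cut vertices each send $r-2a$ edges to $S$, and the same saturation argument forces $r>2a$, so this single leaf block already contributes $2a(r-2a)\ge r/2$ edges to $S$ for $r\ge 3$ (the cases $r\le 2$, where $G$ is $K_2$ or a cycle, reduce to the single-block count above). Summing over the bad components and using $r$-regularity,
\[
\tfrac{r}{2}\,k \;\le\; \sum_{i=1}^{k}\partial_i \;\le\; e(S,V\setminus S)\;\le\; r|S|,
\]
so $k\le 2|S|$, as required.

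The main obstacle is the structural lower bound $\partial_i\ge r/2$, and within it the strict inequality $r>2a$ for leaf blocks: a leaf block cannot be \emph{saturated} (have its non-cut vertices of full degree inside the block), for otherwise its cut vertex would be trapped in the block and could reach neither $S$ nor the rest of its component. Once this strictness is in hand, bounding $2a(r-2a)$ and $(2a+1)(r-2a)$ below by $r/2$ is a short computation over the admissible range of $a$, and the global count $k\le 2|S|$ is then immediate. Together with the $S=\varnothing$ case, condition~2 of Theorem~\ref{EKK} holds for $n=2$, which yields the desired IPD.
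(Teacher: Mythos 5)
Your proposal is correct and follows essentially the same route as the paper: invoke Theorem~\ref{EKK} with $n=2$ and bound the number of bad components of $G\setminus S$ by counting, via a leaf block $K_p$ whose non-cut vertices send all their remaining edges into $S$, the edges between each bad component and $S$, then compare with the degree sum $r|S|$ over $S$. You are slightly more explicit than the paper about the $S=\varnothing$ case and use the per-component bound $r/2$ where the paper uses $(p-1)(r-p+1)\ge r-1$, but these are cosmetic differences.
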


\begin{proof}
For $r \leq 2$ the assertion is clear. Thus assume that $r \geq 3$. First, in Theorem \ref{EKK} let $n=2$. Suppose that $S$ is a subset of $V(G)$ of size $s$. We call a graph bad if each of the blocks is an odd complete graph. Suppose that $G \setminus S$ contains at least $2s+1$ bad components, say $H_1, \ldots, H_{2s+1}$. Let $B$ be a leaf block of $H_1$ or $B=H_1$, when $H_1$ is 2-connected. Since $G$ is $r$-regular, the number of edges between $V(H_1)$ and $S$ is at least $r-1$, to see this, suppose that $B=K_p$, where $1 \leq p \leq r$. So, the number of edges between $V(B)$ and $S$ is at least $(p-1)(r-p+1) \geq r-1$. Thus, the number of edges between $\bigcup_{i=1}^{i=2s+1} V(H_i)$ and $S$ is at least $(2s+1)(r-1)$. Now, by pigeon hole principle, there exists a vertex $v \in S$ such that $d(v) \geq 2(r-1)+1$, a contradiction. Therefore, by Theorem \ref{EKK} every $r$-regular graph has an IPD.
\end{proof}

\begin{thm}\label{hamiltonian}
Let $G$ be a Hamiltonian graph which is not a complete graph of odd order. Then $G$ has an IPD. 
\end{thm}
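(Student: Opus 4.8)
The plan is to work directly with a fixed Hamiltonian cycle $C=v_1v_2\cdots v_nv_1$ and to build an induced $\{P_2,P_3\}$-decomposition along it, which (as noted earlier) is equivalent to an IPD. A single edge is always an induced $P_2$, so if $n$ is even I would simply take the matching $\{v_1v_2,\,v_3v_4,\,\dots,\,v_{n-1}v_n\}$ of the cycle; every part is an induced path and we are done. Hence the whole difficulty is concentrated in the case $n$ odd, where any induced $\{P_2,P_3\}$-decomposition must use an odd number of $P_3$'s, and the cheapest option is exactly one induced $P_3$ together with $(n-3)/2$ edges.

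So assume $n$ is odd. Since $G$ is not a complete graph of odd order and $|V(G)|=n$ is odd, $G\neq K_n$, so $G$ has at least one non-edge. I would pick a non-edge $\{v_a,v_b\}$ whose cyclic distance $d$ along $C$ is minimum; since all cyclic-distance-$1$ pairs are edges of $C$, we have $d\geq 2$. After relabelling I may assume $a=1$ and $b=d+1$. The key point is that minimality of $d$ forces $v_1v_d\in E(G)$ (their cyclic distance is $d-1<d$), while $v_dv_{d+1}$ is an edge of $C$ and $v_1v_{d+1}\notin E(G)$; hence $v_1v_dv_{d+1}$ is an \emph{induced} $P_3$. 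Removing these three vertices from $C$ leaves the two cyclic arcs $A=\{v_2,\dots,v_{d-1}\}$ and $B=\{v_{d+2},\dots,v_n\}$, together containing $n-3$ vertices.

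The main obstacle is to perfectly match $A\cup B$ by induced $P_2$'s, and here a short parity analysis finishes the argument. If $d=2$, then $A$ is empty and $B=\{v_4,\dots,v_n\}$ is a single arc of even length $n-3$, matched by consecutive pairs. If $d\ge 3$, then by minimality every cyclic-distance-$2$ pair is an edge, and a parity check shows $|A|=d-2$ and $|B|=n-d-1$ have the same parity. When both are even I match each arc internally by consecutive pairs; when both are odd I match all of $A$ except $v_2$ and all of $B$ except $v_n$ by consecutive pairs, and then close up with the edge $v_2v_n$, which exists because $v_2$ and $v_n$ are at cyclic distance $2$. In every case the induced $P_3$ together with these edges is the desired IPD.

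The step I expect to require the most care is the odd–odd subcase when $d\ge 3$: I must verify that the two leftover vertices can always be arranged to be exactly $v_2$ and $v_n$, and that the chord $v_2v_n$ is guaranteed. The point is that the very hypothesis that \emph{no} short non-edge exists (i.e.\ $d\ge 3$) is precisely what makes all distance-$2$ chords available, so the needed edge is present exactly when it is needed; checking that $A$ and $B$ are nonempty and disjoint from the $P_3$ (which holds since $n\ge 7$ in this regime, as $C_5^2=K_5$ is excluded) completes the verification.
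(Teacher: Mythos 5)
Your proof is correct, and while it shares the paper's overall strategy -- work along a Hamiltonian cycle, dispose of even $n$ by a matching of cycle edges, and for odd $n$ manufacture exactly one induced $P_3$ from a non-edge plus a perfect matching of the remaining $n-3$ vertices -- the execution is genuinely different and arguably cleaner. The paper argues by contradiction: it first reduces to the case where every distance-$2$ chord $v_iv_{i+2}$ is present, then runs through an explicit IPD for each hypothetical missing chord $v_1v_{2i+1}$ and $v_1v_{2i}$ (with a ``similarly, one can see'' step), forcing $v_1$ to be adjacent to everything and hence $G=K_{2t+1}$. You instead make a single extremal choice -- a non-edge $\{v_1,v_{d+1}\}$ of minimum cyclic distance $d$ -- which immediately hands you the induced $P_3$ $v_1v_dv_{d+1}$ (the chord $v_1v_d$ exists by minimality), and then a short parity analysis on the two leftover arcs, using at most one distance-$2$ chord $v_2v_n$ (again guaranteed by minimality when $d\ge 3$), finishes the matching. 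What your route buys is a uniform, fully explicit construction with no ``similarly'' gaps and no need to enumerate chords from a fixed vertex; what the paper's route buys is essentially nothing extra here, though its contradiction framing makes the role of $K_{2t+1}$ as the unique obstruction slightly more visible. I checked the delicate points of your argument -- that $d\le \lfloor n/2\rfloor$ so $v_1v_d$ really has cyclic distance $d-1$, that $|A|=d-2$ and $|B|=n-d-1$ have equal parity since $n$ is odd, and that $v_2$ and $v_n$ are distinct from $v_1,v_d,v_{d+1}$ when $d\ge 3$ -- and all hold.
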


\begin{proof}
Let $C: v_1, \ldots, v_n$ be a Hamiltonian cycle in $G$. First, notice that if $n$ is even, then we are done. So, we may assume that $n=2t+1$, for some positive integer $t$. Note that if there exists some positive integer $i$, $1 \leq i \leq 2t+1$ such that $v_{i}v_{i+2} \notin E(G)$, then $V(G)$ can be decomposed into $P=v_{i}v_{i+1}v_{i+2}$ and $t-1$ paths of order 2 and so $G$ has an IPD. So, we may assume that $v_{i}v_{i+2} \in E(G)$, (mod $2t+1$), for $i=1,2, \ldots, 2t+1$. Now, note that if $v_1v_5 \notin E(G)$, then $\mathcal{P}= \{v_1v_3v_5, v_2v_4, v_6v_7, v_8v_9, \ldots, v_{2t}v_{2t+1}\}$ is an IPD for $G$. So, we may assume that $v_1v_5 \in E(G)$. Also, if $v_1v_7 \notin E(G)$, then $\mathcal{P}= \{v_1v_5v_7, v_2v_3, v_4v_6, v_8v_9, \ldots, v_{2t}v_{2t+1}\}$ is an IPD for $G$. By this method, one can see that $v_1v_{2i+1} \in E(G)$, for $i=1, \ldots, t$.
\\
Also, note that if $v_1v_4 \notin E(G)$, then $\mathcal{P}=\{v_1v_3v_4, v_5v_6, \ldots, v_{2t-1}v_{2t}, v_{2t+1}v_{2}\}$ is an IPD for $G$. Also if $v_1v_6 \notin E(G)$, then $\mathcal{P}=\{v_1v_5v_6, v_3v_4, \ldots, v_{2t-1}v_{2t}, v_{2t+1}v_{2}\}$ is an IPD for $G$. Similarly, one can see that $v_1v_{2i} \in E(G)$, for $i=1, \ldots, t$. This implies that $v_1$ is adjacent to all vertices of graph. Since $v_1$ is an arbitrary vertex, $G = K_{2t+1}$, a contradiction. This completes the proof.
\end{proof}

Now, we would like to study the induced path number of cubic graphs. Before proving our last result, we need the following theorem.

\begin{thm}\label{c6factor}{\rm \cite{kanoc6}}
Every connected cubic bipartite graph has a $\{C_n: n \geq 6 \}$-factor. 
\end{thm}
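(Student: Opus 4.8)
The plan is to recast the conclusion in a form suited to the cubic setting and then argue by an extremal exchange. Since $G$ is cubic, a spanning subgraph is a $\{C_n : n \geq 6\}$-factor exactly when it is a $2$-factor whose shortest cycle has length at least $6$, and a $2$-factor of a cubic graph is precisely the complement $G \setminus M$ of a perfect matching $M$. Because $G$ is bipartite, every cycle of $G$ has even length, so a cycle of $G \setminus M$ has length $4, 6, 8, \dots$; hence the required factor is equivalent to a perfect matching $M$ for which $G \setminus M$ contains no $C_4$. Thus the whole problem reduces to finding a perfect matching that meets (uses an edge of) every $4$-cycle of $G$. First I would record that the family of candidates is nonempty: by K\"onig's edge-colouring theorem a cubic bipartite graph is properly $3$-edge-colourable, so $E(G)$ splits into three perfect matchings $M_1, M_2, M_3$, and each pairwise union $M_i \cup M_j$ is a $2$-factor (every vertex having degree $2$). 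This guarantees the set of $2$-factors over which I shall optimise is nonempty.

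Next, choose a $2$-factor $F = G \setminus M$ minimising the number of $C_4$-components, and suppose for contradiction this number is positive; fix a $4$-cycle $Q = v_1v_2v_3v_4$ of $F$, with $v_1, v_3$ and $v_2, v_4$ on the two sides of the bipartition. Each $v_i$ has exactly one edge $e_i = v_i u_i \in M$ outside $F$, and using bipartiteness (no edge joins $v_1, v_3$ or $v_2, v_4$) together with the matching property (no vertex lies on two $M$-edges) one checks that $u_1, u_2, u_3, u_4$ are four distinct vertices outside $Q$. I would then seek an $(F,M)$-alternating cycle through an edge of $Q$, say through $v_1v_2$, whose switch deletes $v_1v_2$ from the factor: the switch leaves the path $v_2v_3v_4v_1$ and reattaches its endpoints $v_2, v_1$ to the distinct external vertices $u_2, u_1$, so the new cycle carrying $v_1,v_2,v_3,v_4$ also contains at least two further vertices and therefore has length $\geq 6$. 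Connectivity of $G$ is what makes the required alternating cycle available.

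The main obstacle is exactly controlling the side effects of the exchange: breaking $Q$ must not create a new $C_4$ on some other cycle of $F$ met by the alternating cycle, nor increase the total count of $4$-cycles. I would handle this by taking a \emph{shortest} admissible alternating cycle and splitting into cases according to whether the exit vertices $u_1, u_2, u_3, u_4$ lie on a single cycle of $F$ or on several. The parity forced by bipartiteness -- all graph cycles and all $(F,M)$-alternating cycles have even length -- is the tool that rules out the creation of a new length-$4$ cycle and forces a strict decrease in the number of $C_4$'s. This contradiction shows the minimum is $0$, i.e.\ some $2$-factor has girth at least $6$, which is the desired $\{C_n : n \geq 6\}$-factor.
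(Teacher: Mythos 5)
This statement is quoted in the paper as a known theorem of Kano, Lee and Suzuki (reference \cite{kanoc6}) and is given no proof there, so there is no in-paper argument to compare against; what follows is an assessment of your proposal on its own terms. Your reduction is correct and cleanly done: a $\{C_n : n\geq 6\}$-factor of a cubic bipartite $G$ is exactly a $2$-factor $F=G\setminus M$ with no $C_4$ component, which (since $F$ is $2$-regular) is the same as a perfect matching $M$ meeting every $4$-cycle of $G$; and K\"onig's edge-colouring theorem guarantees that $2$-factors exist. The observation that the four $M$-neighbours $u_1,\dots,u_4$ of a $C_4$ component are distinct and lie outside the $C_4$ is also right.

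The gap is that the entire difficulty of the theorem sits in the step you defer to ``splitting into cases,'' and the tools you name do not close it. Switching $F$ along an $(F,M)$-alternating cycle $A$ can create a new $C_4$ component, and bipartite parity does nothing to forbid this: if $w_2w_3$ and $u_2u_3$ are $F$-edges not on $A$, while $w_2u_2$ and $w_3u_3$ are the $M$-edges of $w_2,w_3$ (hence forced onto $A$ once these vertices lie on $A$), then after the switch $w_2$'s neighbours become $w_3$ and $u_2$, and $w_2w_3u_3u_2$ is a brand-new $C_4$ component of $F\triangle A$ --- all cycle lengths involved are even, so parity is satisfied throughout. Nothing in your sketch shows that a \emph{shortest} admissible alternating cycle avoids this configuration, nor that the net count of $C_4$'s strictly drops; you explicitly identify this as ``the main obstacle'' and then only gesture at it. (The existence of an alternating cycle through a prescribed edge of $Q$ also needs an argument --- e.g.\ take a perfect matching $N\subseteq F$ containing that edge and look at the cycle of $M\cup N$ through it --- though that part is repairable.) As it stands the proposal is a plausible plan whose decisive step is unproved and whose stated mechanism for that step is insufficient; the published proof is indeed an extremal switching argument, but its content is precisely the careful analysis you have omitted.
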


Using Theorem \ref{c6factor}, we can prove the following.

\begin{thm}
Let $G$ be a connected bipartite cubic graph of order $n$. Then $\rho(G) \leq \frac{n}{3}$.
\end{thm}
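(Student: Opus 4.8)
The plan is to apply Theorem~\ref{c6factor} to obtain a spanning $\{C_n : n \ge 6\}$-factor $F$ of $G$, that is, a disjoint union of cycles $C^{(1)}, \dots, C^{(k)}$ covering $V(G)$, each of length at least $6$. Since $G$ is bipartite, every such cycle has even length, and since $G$ is cubic, the edges $M = E(G) \setminus E(F)$ not used by $F$ form a perfect matching (each vertex has degree $3-2=1$ in $M$). I would decompose each cycle separately into induced sub-paths and take the union; writing $\ell_i$ for the length of $C^{(i)}$, it suffices to realise each $C^{(i)}$ by at most $\lfloor \ell_i/3 \rfloor$ induced paths, since then $\rho(G) \le \sum_i \lfloor \ell_i/3 \rfloor \le \sum_i \ell_i/3 = n/3$.

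The first key point is that short sub-paths of a factor cycle are automatically induced in $G$: along a cycle $v_1 v_2 \cdots$, the two endpoints of a block on three consecutive vertices $v_j v_{j+1} v_{j+2}$ lie in the same part of the bipartition, so $v_j v_{j+2} \notin E(G)$ and the $P_3$ is induced; a $P_2$ is trivially induced. Hence for a cycle with $\ell \equiv 0 \pmod 3$ I simply cut it into $\ell/3$ induced copies of $P_3$, already meeting the target. For a longer block $v_j \cdots v_{j+t}$ the only possible chords are the distance-three pairs $v_a v_{a+3}$: every other non-consecutive pair in such a block joins vertices of the same part, hence is a non-edge, while a pair $v_a v_{a+3}$ sits at odd cycle-distance and so, if present, is exactly an edge of the matching $M$ lying inside the cycle.

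When $\ell \not\equiv 0 \pmod 3$ the average block must exceed three vertices, so I introduce longer induced blocks. If $\ell \equiv 1 \pmod 3$, I would use one induced $P_4$ together with $(\ell-4)/3$ copies of $P_3$; the block $v_j v_{j+1} v_{j+2} v_{j+3}$ is induced precisely when the single chord $v_j v_{j+3}$ is absent. If $\ell \equiv 2 \pmod 3$, the excess is two, so I would use either one induced $P_5$ (requiring two consecutive absent chords $v_j v_{j+3}$ and $v_{j+1} v_{j+4}$) or two induced $P_4$'s, completing in either case with copies of $P_3$. Because the relevant chords all belong to $M$, they are pairwise vertex-disjoint, so at most $\ell/2$ of the $\ell$ cyclic positions carry a chord; thus at least $\lceil \ell/2 \rceil$ positions support an induced $P_4$, giving ample room to place the long block(s) and to fill the remaining arcs, each of length divisible by $3$, with induced $P_3$'s.

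The main obstacle is the case $\ell \equiv 2 \pmod 3$: I must not only locate chord-free positions but also place the two units of excess so that the arcs left behind both have length divisible by $3$ (so that they tile by $P_3$'s). This imposes a congruence-and-spacing condition on the two chosen chord-free starting positions $p,q$, roughly $q \equiv p+1 \pmod 3$ with $4 \le q-p \le \ell-4$. I would discharge it by noting that the chords, being at most $\ell/2$ disjoint matching edges, are too few to occupy two whole residue classes modulo $3$, so chord-free positions meet at least two classes; since any two distinct classes mod~$3$ contain a successor pair, and chord-free positions are plentiful, a valid pair $p,q$ can always be selected (and when two chord-free positions happen to be adjacent, the cleaner single-$P_5$ decomposition applies instead). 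Carrying out this placement bookkeeping cleanly is the one step I expect to require real care.
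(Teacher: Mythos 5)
Your overall route is the same as the paper's: invoke Theorem \ref{c6factor} to get a $\{C_n : n \ge 6\}$-factor, decompose each factor cycle of length $\ell$ into at most $\lfloor \ell/3 \rfloor$ induced paths, and split on $\ell \bmod 3$, using bipartiteness to make the $P_3$'s automatically induced and the cubic condition to control the distance-three chords. Your observations that the only possible chords inside a block of at most five consecutive cycle vertices are the distance-three pairs, and that these chords are pairwise disjoint matching edges (so at least $\ell/2$ of the $\ell$ cyclic positions are chord-free), are correct, and your treatment of the cases $\ell \equiv 0$ and $\ell \equiv 1 \pmod 3$ is complete; for $\ell \equiv 1$ it is in fact slightly more direct than the paper's, which only tests the two chords at $v_1$ and derives $d(v_1) \ge 4$ for a contradiction.

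The one genuine gap is where you anticipated it, and the argument you sketch for $\ell \equiv 2 \pmod 3$ does not work as stated. Knowing that the chord-free positions meet two residue classes of the \emph{position labels} modulo $3$ does not produce a pair $p,q$ whose cyclic distance $d$ satisfies both $d \equiv 1 \pmod 3$ and $4 \le d \le \ell-4$: since $\ell \equiv 2 \pmod 3$, wrapping around the cycle changes the residue of the difference, and nothing in the two-classes statement controls the separation. Fortunately your matching bound already closes the case cleanly: if some two \emph{cyclically consecutive} positions $p$ and $p+1$ are both chord-free, take the induced $P_5$ starting at $p$ and tile the remaining arc of $\ell-5 \equiv 0 \pmod 3$ vertices by $P_3$'s; otherwise the chord-free set is independent in the cyclic adjacency on positions, hence has size at most $\ell/2$, hence (by your lower bound) size exactly $\ell/2$, hence consists of every second position --- and then $p$ and $p+4$ are both chord-free for any chord-free $p$, giving two adjacent induced $P_4$'s and a leftover arc of $\ell-8 \equiv 0 \pmod 3$ vertices. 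The paper sidesteps this counting entirely: once it may assume $v_1v_4 \in E(G)$, it knows the unique matching edge at $v_1$, hence that $v_{t-2}v_1 \notin E(G)$, and a similar look at $v_2$ hands it either an explicit induced $P_5$ or an explicit pair of induced $P_4$'s. Either finish is short; yours just needs to be written out.
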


\begin{proof}
By Theorem \ref{c6factor}, $G$ has a $\{C_n: n \geq 6 \}$-factor, say $F$. Let $C_t=v_1, \ldots, v_t,v_1$ be an arbitrary cycle in $F$. We show that the induced subgraph on $V(C_t)$ can be decomposed into at most $\frac{t}{3}$ induced paths and hence we are done. 
\\
First suppose that $t=3k$, for some positive integer $k$. Since $G$ is triangle-free, $C_t$ can be decomposed into $k$ induced paths of length 3. Now, let $t=3k+1$. Note that if $v_1v_{4} \notin E(G)$, then $\mathcal{P}=\{v_1v_{2}v_{3}v_{4}, v_{5}v_{6}v_{7}, \ldots, v_{t-2}v_{t-1}v_{t}\}$ is an IPD for $G$. Thus $v_{1}v_{4} \in E(G)$. Similarly, $v_1v_{t-2} \in E(G)$. This implies that $d(v_1) \geq 4$, a contradiction.
\\
Finally, assume that $t=3k+2$. Note that if $v_1v_4, v_5v_8 \notin E(G)$, then $\mathcal{P}=\{v_1v_2v_3v_4,\break  v_5v_6v_7v_8, v_9v_{10}v_{11},  \ldots, v_{t-2}v_{t-1}v_t\}$ is the desired IPD. So, assume that $v_1v_4 \in E(G)$. Note that if $v_{t-1}v_2 \in E(G)$, then 
$\mathcal{P}=\{v_{t-2}v_{t-1}v_{t}v_1, v_2v_3v_4v_5, v_6v_7v_8,  \ldots, v_{t-5}v_{t-4}v_{t-3}\}$ is the desired IPD. So, suppose that $v_{t-1}v_2 \notin E(G)$. Now, since $G$ has no $C_5$, by considering the paths $v_{t-2}v_{t-1}v_tv_1v_2$ and $v_{3i}v_{3i+1}v_{3i+2}$, for $i=1, \ldots, k-1$, we are done. This completes the proof.
\end{proof}

\noindent
\textbf{Acknowledgements.} The authors are deeply grateful to Kenta Ozeki for introducing Theorem \ref{EKK}.

\end{document}